\newtheorem{theorem}{Theorem}[section]
\newtheorem{lemma}[theorem]{Lemma}
\theoremstyle{definition}
\newtheorem{definition}[theorem]{Definition}
\newtheorem{example}[theorem]{Example}
\theoremstyle{remark}
\numberwithin{equation}{section}
\begin{document}
\setcounter{page}{1}
\title[]{$\ast$-g-frames in tensor products of hilbert $C^{\ast}$-modules}

	\author{Mohamed ROSSAFI$^{*}$}
	\address{Department of Mathematics
		\newline \indent University of Ibn Tofail
		\newline \indent B.P.133 Kenitra, Morocco}
	\email{rossafimohamed@gmail.com}

\author{ Samir KABBAJ}
\address{Department of Mathematics
	\newline \indent University of Ibn Tofail
	\newline \indent B.P.133 Kenitra, Morocco}
\email{samkabbaj@yahoo.fr }

\date{19/07/2017}
\subjclass[2010]{Primary 42C15; Secondary 46L05.}

\keywords{g-frame, $\ast$-g-frame, $C^{\ast}$-algebra, Hilbert $C^{\ast}$-modules.}

\begin{abstract}
In this paper, we study $\ast$-g-frames in tensor products of Hilbert $C^{\ast}$-modules. We show that a tensor product of two $\ast$-g-frames is a $\ast$-g-frames, and we get some result.
\end{abstract} \maketitle
\renewcommand{\thefootnote}{}
\footnotetext{ $^*$Corresponding author
	\par
	E-mail address: rossafimohamed@gmail.com}
\section{Introduction}
Frames for Hilbert spaces were introduced in 1952 by Duffin and Schaefer \cite{Duf}. They abstracted the fundamental notion of Gabor \cite{Gab} to study signal processing. Many generalizations of frames were introduced, frames of subspaces \cite{Asg}, Pseudo-frames \cite{Li}, oblique frames \cite{OC}, g-frames \cite{AB}, $\ast$-frame \cite{Ali} in Hilbert $ C^{\ast} $-modules. In 2000, Frank-Larson \cite{F3} introduced the notion of frames in Hilbert $ C^{\ast} $-modules as a generalization of frames in Hilbert spaces. Recentely, A. Khosravi and B. Khosravi \cite{AB} introduced the g-frame theory in Hilbert $C^{\ast}$-modules, and Alijani, and Dehghan \cite{Ali} introduced the g-frame theory in Hilbert $C^{\ast}$-modules. N.Bounader and S.Kabbaj \cite{Kab} and A.Alijani \cite{A} introduced the $\ast$-g-frames which are generalizations of g-frames in Hilbert $C^{\ast}$-modules. In this article, we study the $\ast$-g-frames in tensor products of Hilbert $C^{\ast}$-modules and $\ast$-g-frames in two Hilbert $C^{\ast}$-modules with different $C^{\ast}$-algebras. In section 2 we briefly recall the definitions and basic properties of $C^{\ast}$-algebra, Hilbert $C^{\ast}$-modules, frames, g-frames, $\ast$-frames and $\ast$-g-frames in Hilbert $C^{\ast}$-modules. In section 3 we investigate tensor product of Hilbert $C^{\ast}$-modules, we show that tensor product of $\ast$-g-frames for Hilbert $C^{\ast}$-modules $\mathcal{H}$ and $\mathcal{K}$, present $\ast$-g-frames for $\mathcal{H}\otimes\mathcal{K}$, and tensor product of their $\ast$-g-frame operators is the $\ast$-g-frame operator of the tensor product of $\ast$-g-frames. We also study $\ast$-g-frames in two Hilbert $C^{\ast}$-modules with different $C^{\ast}$-algebras.
\section{Preliminaries}
Let $I$ and $J$ be countable index sets. In this section we briefly recall the definitions and basic properties of $C^{\ast}$-algebra, Hilbert $C^{\ast}$-modules, g-frame, $\ast$-g-frame in Hilbert $C^{\ast}$-modules. For information about frames in Hilbert spaces we refer to \cite{Ch}. Our reference for $C^{\ast}$-algebras is \cite{Dav,Con}. For a $C^{\ast}$-algebra $\mathcal{A}$, an element $a\in\mathcal{A}$ is positive ($a\geq 0$) if $a=a^{\ast}$ and $sp(a)\subset\mathbf{R^{+}}$. $\mathcal{A}^{+}$ denotes the set of positive elements of $\mathcal{A}$.
\begin{definition}
	\cite{BA}. Let $ \mathcal{A} $ be a unital $C^{\ast}$-algebra and $\mathcal{H}$ be a left $ \mathcal{A} $-module, such that the linear structures of $\mathcal{A}$ and $ \mathcal{H} $ are compatible. $\mathcal{H}$ is a pre-Hilbert $\mathcal{A}$-module if $\mathcal{H}$ is equipped with an $\mathcal{A}$-valued inner product $\langle.,.\rangle_{\mathcal{A}} :\mathcal{H}\times\mathcal{H}\rightarrow\mathcal{A}$, such that is sesquilinear, positive definite and respects the module action. In the other words,
	\begin{itemize}
		\item [(i)] $ \langle x,x\rangle_{\mathcal{A}}\geq0 $ for all $ x\in\mathcal{H} $ and $ \langle x,x\rangle_{\mathcal{A}}=0$ if and only if $x=0$.
		\item [(ii)] $\langle ax+y,z\rangle_{\mathcal{A}}=a\langle x,y\rangle_{\mathcal{A}}+\langle y,z\rangle_{\mathcal{A}}$ for all $a\in\mathcal{A}$ and $x,y,z\in\mathcal{H}$.
		\item[(iii)] $ \langle x,y\rangle_{\mathcal{A}}=\langle y,x\rangle_{\mathcal{A}}^{\ast} $ for all $x,y\in\mathcal{H}$.
	\end{itemize}	 
	For $x\in\mathcal{H}, $ we define $||x||=||\langle x,x\rangle_{\mathcal{A}}||^{\frac{1}{2}}$. If $\mathcal{H}$ is complete with $||.||$, it is called a Hilbert $\mathcal{A}$-module or a Hilbert $C^{\ast}$-module over $\mathcal{A}$. For every $a$ in $C^{\ast}$-algebra $\mathcal{A}$, we have $|a|=(a^{\ast}a)^{\frac{1}{2}}$ and the $\mathcal{A}$-valued norm on $\mathcal{H}$ is defined by $|x|=\langle x, x\rangle_{\mathcal{A}}^{\frac{1}{2}}$ for $x\in\mathcal{H}$.
	
	Let $\mathcal{H}$ and $\mathcal{K}$ be two Hilbert $\mathcal{A}$-modules, A map $T:\mathcal{H}\rightarrow\mathcal{K}$ is said to be adjointable if there exists a map $T^{\ast}:\mathcal{K}\rightarrow\mathcal{H}$ such that $\langle Tx,y\rangle_{\mathcal{A}}=\langle x,T^{\ast}y\rangle_{\mathcal{A}}$ for all $x\in\mathcal{H}$ and $y\in\mathcal{K}$.
	
	From now on, we assume that $\{V_{i}\}_{i\in I}$ and $\{W_{j}\}_{j\in J}$ are two sequences of Hilbert $\mathcal{A}$-modules. We also reserve the notation $End_{\mathcal{A}}^{\ast}(\mathcal{H},\mathcal{K})$ for the set of all adjointable operators from $\mathcal{H}$ to $\mathcal{K}$ and $End_{\mathcal{A}}^{\ast}(\mathcal{H},\mathcal{H})$ is abbreviated to $End_{\mathcal{A}}^{\ast}(\mathcal{H})$.
\end{definition}
\begin{definition} 
	\cite{BA}. Let $ \mathcal{H} $ be a Hilbert $\mathcal{A}$-module. A family $\{x_{i}\}_{i\in I}$ of elements of $\mathcal{H}$ is a frame for $ \mathcal{H} $, if there exist two positive constants $A$ , $B$, such that for all $x\in\mathcal{H}$,
	\begin{equation}\label{1}
		A\langle x,x\rangle_{\mathcal{A}}\leq\sum_{i\in I}\langle x,x_{i}\rangle_{\mathcal{A}}\langle x_{i},x\rangle_{\mathcal{A}}\leq B\langle x,x\rangle_{\mathcal{A}}.
	\end{equation}
	The numbers $A$ and $B$ are called lower and upper bound of the frame, respectively. If $A=B=\lambda$, the frame is $\lambda$-tight. If $A = B = 1$, it is called a normalized tight frame or a Parseval frame. If the sum in the middle of \eqref{1} is convergent in norm, the frame is called standard.
\end{definition}	
\begin{definition}
	\cite{AB}. We call a sequence $\{\Lambda_{i}\in End_{\mathcal{A}}^{\ast}(\mathcal{H},V_{i}):i\in I \}$ a g-frame in Hilbert $\mathcal{A}$-module $\mathcal{H}$ with respect to $\{V_{i}:i\in I \}$ if there exist two positive constants $C$, $D$, such that for all $x\in\mathcal{H}$, 
	\begin{equation}\label{2}
		C\langle x,x\rangle_{\mathcal{A}}\leq\sum_{i\in I}\langle \Lambda_{i}x,\Lambda_{i}x\rangle_{\mathcal{A}}\leq D\langle x,x\rangle_{\mathcal{A}}.
	\end{equation}
	The numbers $C$ and $D$ are called lower and upper bound of the g-frame, respectively. If $C=D=\lambda$, the g-frame is $\lambda$-tight. If $C = D = 1$, it is called a g-Parseval frame. If the sum in the middle of \eqref{2} is convergent in norm, the g-frame is called standard.
\end{definition}
\begin{definition}
	\cite{Ali}. Let $ \mathcal{H} $ be a Hilbert $\mathcal{A}$-module. A family $\{x_{i}\}_{i\in I}$ of elements of $\mathcal{H}$ is a $\ast$-frame for $ \mathcal{H} $, if there	exist strictly nonzero elements $A$ , $B$ in $\mathcal{A}$, such that for all $x\in\mathcal{H}$,
	\begin{equation}\label{3}
		A\langle x,x\rangle_{\mathcal{A}} A^{\ast}\leq\sum_{i\in I}\langle x,x_{i}\rangle_{\mathcal{A}}\langle x_{i},x\rangle_{\mathcal{A}}\leq B\langle x,x\rangle_{\mathcal{A}} B^{\ast}.
	\end{equation}
	The numbers $A$ and $B$ are called lower and upper bound of the $\ast$-frame, respectively. If $A=B=\lambda$, the $\ast$-frame is $\lambda$-tight. If $A = B = 1$, it is called a normalized tight $\ast$-frame or a Parseval $\ast$-frame. If the sum in the middle of \eqref{3} is convergent in norm, the $\ast$-frame is called standard.
\end{definition}
\begin{definition}
	\cite{Kab}. We call a sequence $\{\Lambda_{i}\in End_{\mathcal{A}}^{\ast}(\mathcal{H},V_{i}):i\in I \}$ a $\ast$-g-frame in Hilbert $\mathcal{A}$-module $\mathcal{H}$ with respect to $\{V_{i}:i\in I \}$ if there exist strictly nonzero elements $A$ , $B$ in $\mathcal{A}$, such that for all $x\in\mathcal{H}$, 
	\begin{equation}\label{4}
		A\langle x,x\rangle_{\mathcal{A}} A^{\ast}\leq\sum_{i\in I}\langle \Lambda_{i}x,\Lambda_{i}x\rangle_{\mathcal{A}}\leq B\langle x,x\rangle_{\mathcal{A}} B^{\ast}.
	\end{equation}
	The numbers $A$ and $B$ are called lower and upper bound of the $\ast$-g-frame, respectively. If $A=B=\lambda$, the $\ast$-g-frame is $\lambda$-tight. If $A= B = 1$, it is called a $\ast$-g-Parseval frame. If the sum in the middle of \eqref{4} is convergent in norm, the $\ast$-g-frame is called standard.\\
	The $\ast$-g-frame operator $S_{\Lambda}$ is defined by: $S_{\Lambda}x=\sum_{i\in I}\Lambda_{i}^{\ast}\Lambda_{i}x$ ,$\forall x\in\mathcal{H}$.
\end{definition}
\begin{lemma}\label{2.6}
	\cite{BA}. If $Q\in End_{\mathcal{A}}^{\ast}(\mathcal{H})$ is an invertible $\mathcal{A}$-linear map then for all $z\in\mathcal{H}\otimes\mathcal{K}$ we have $$\|Q^{\ast-1}\|^{-1}.|z|\leq|(Q^{\ast}\otimes I)z|\leq\|Q\|.|z|.$$
\end{lemma}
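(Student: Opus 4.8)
The plan is to reduce everything to one elementary estimate for adjointable operators and then apply it to $T:=Q^{\ast}\otimes I$ acting on $\mathcal{H}\otimes\mathcal{K}$. First I would establish the building block: for any adjointable operator $T$ on $\mathcal{H}\otimes\mathcal{K}$ and any $w$ one has $|Tw|\leq\|T\|\,|w|$ in the module-valued norm. Indeed, $\langle Tw,Tw\rangle=\langle w,T^{\ast}Tw\rangle$, and the positive operator $T^{\ast}T$ satisfies $T^{\ast}T\leq\|T^{\ast}T\|\,I=\|T\|^{2}I$, whence $\langle Tw,Tw\rangle\leq\|T\|^{2}\langle w,w\rangle$. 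Taking square roots and using that $t\mapsto t^{1/2}$ is operator monotone on the positive cone gives $|Tw|=\langle Tw,Tw\rangle^{1/2}\leq\|T\|\,\langle w,w\rangle^{1/2}=\|T\|\,|w|$.

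For the upper bound I would apply this with $T=Q^{\ast}\otimes I$, invoking the standard multiplicativity of the operator norm on tensor products, $\|S\otimes R\|=\|S\|\,\|R\|$. This gives $\|Q^{\ast}\otimes I\|=\|Q^{\ast}\|\,\|I\|=\|Q^{\ast}\|=\|Q\|$, so that $|(Q^{\ast}\otimes I)z|\leq\|Q\|\,|z|$ for every $z\in\mathcal{H}\otimes\mathcal{K}$.

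For the lower bound I would use invertibility. Since $Q$ is invertible in $End_{\mathcal{A}}^{\ast}(\mathcal{H})$, so is $Q^{\ast}$, and hence $Q^{\ast}\otimes I$ is invertible on $\mathcal{H}\otimes\mathcal{K}$ with inverse $Q^{\ast-1}\otimes I$. Writing $z=(Q^{\ast-1}\otimes I)(Q^{\ast}\otimes I)z$ and applying the building block to $Q^{\ast-1}\otimes I$ yields $|z|\leq\|Q^{\ast-1}\otimes I\|\,|(Q^{\ast}\otimes I)z|=\|Q^{\ast-1}\|\,|(Q^{\ast}\otimes I)z|$, which rearranges to $\|Q^{\ast-1}\|^{-1}\,|z|\leq|(Q^{\ast}\otimes I)z|$. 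Combining the two estimates proves the lemma. The only ingredients beyond routine manipulation are the tensor-norm identity $\|S\otimes R\|=\|S\|\,\|R\|$ and the operator monotonicity of the square root; both are standard in Hilbert $C^{\ast}$-module theory, so I expect the main care to lie in checking that $Q^{\ast}\otimes I$ is adjointable and invertible with the stated inverse, rather than in any delicate inequality.
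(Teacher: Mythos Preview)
The paper does not supply its own proof of this lemma; it is quoted from \cite{BA} without argument. Your proposal is correct and is essentially the standard argument: the key inequality $\langle Tw,Tw\rangle\le\|T\|^{2}\langle w,w\rangle$ (from $T^{\ast}T\le\|T\|^{2}I$) plus operator monotonicity of the square root gives $|Tw|\le\|T\|\,|w|$, and applying this once to $Q^{\ast}\otimes I$ and once to its inverse $Q^{\ast-1}\otimes I$ yields both bounds. One minor remark: you do not need the full tensor-norm identity $\|S\otimes R\|=\|S\|\,\|R\|$; the simpler fact $\|S\otimes I\|=\|S\|$ suffices and avoids any subtlety about which tensor norm is in play.
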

\begin{lemma}\label{2.7}
	\cite{Ali}. If $\varphi:\mathcal{A}\longrightarrow\mathcal{B}$ is a $\ast$-homomorphism between $\mathcal{C}^{\ast}$-algebras, then $\varphi$ is increasing, that is, if $a\leq b$, then $\varphi(a)\leq\varphi(b)$.
\end{lemma}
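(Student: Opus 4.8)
The plan is to reduce the statement to the single structural fact that a $\ast$-homomorphism carries positive elements to positive elements, after which linearity finishes the job. Recall that by definition $a\leq b$ in a $C^{\ast}$-algebra means precisely that $b-a$ lies in the positive cone, i.e. $b-a=(b-a)^{\ast}$ and $sp(b-a)\subset\mathbf{R^{+}}$, as recalled at the start of Section 2. So the whole lemma hinges on showing that $\varphi(b-a)$ is positive in $\mathcal{B}$.

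First I would invoke the standard characterization of positivity: an element $p$ of a $C^{\ast}$-algebra is positive if and only if it can be written as $p=c^{\ast}c$ for some element $c$ of that algebra. For the direction I need here, one takes $c$ to be the positive square root $p^{1/2}$, which exists by the continuous functional calculus applied to the nonnegative function $t\mapsto\sqrt{t}$ on $sp(p)\subset\mathbf{R^{+}}$. Applying this to $p=b-a\geq0$ produces an element $c\in\mathcal{A}$ with $b-a=c^{\ast}c$.

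Next I would push this factorization through $\varphi$. Since $\varphi$ is a $\ast$-homomorphism it is multiplicative and commutes with the involution, whence
\[
\varphi(b-a)=\varphi(c^{\ast}c)=\varphi(c^{\ast})\varphi(c)=\varphi(c)^{\ast}\varphi(c).
\]
Any element of the form $d^{\ast}d$ in the $C^{\ast}$-algebra $\mathcal{B}$ is positive (the converse half of the same characterization), so $\varphi(b-a)\in\mathcal{B}^{+}$. Finally, because $\varphi$ is linear, $\varphi(b-a)=\varphi(b)-\varphi(a)$, and therefore $\varphi(b)-\varphi(a)\geq0$, that is $\varphi(a)\leq\varphi(b)$, as claimed.

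The only point demanding genuine $C^{\ast}$-algebra theory, and hence the main obstacle if one insists on a self-contained argument, is the equivalence between positivity and admitting a $c^{\ast}c$ factorization. The forward implication rests on the existence of the positive square root via functional calculus, while the converse, that $d^{\ast}d\geq0$ in every $C^{\ast}$-algebra, is the deeper statement, since a priori the spectrum of $d^{\ast}d$ could meet the negative reals; ruling this out is a classical theorem. Granting these facts, the remainder of the proof is a single line using only multiplicativity, $\ast$-preservation, and linearity of $\varphi$.
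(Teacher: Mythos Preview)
Your argument is correct and is precisely the standard proof: reduce to preservation of positivity, factor $b-a=c^{\ast}c$ via the positive square root, and push through the $\ast$-homomorphism. Note, however, that the paper does not supply its own proof of this lemma at all; it merely quotes the statement from \cite{Ali}, so there is nothing to compare your approach against beyond observing that what you wrote is the canonical justification one would expect the cited reference to contain.
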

\section{Main results}
Suppose that $\mathcal{A} , \mathcal{B}$ are $C^{\ast}$-algebras and we take $\mathcal{A}\otimes\mathcal{B}$ as the completion of $\mathcal{A}\otimes_{alg}\mathcal{B}$ with the spatial norm. $\mathcal{A}\otimes\mathcal{B}$ is the spatial tensor product of $\mathcal{A}$ and $\mathcal{B}$, also suppose that $\mathcal{H}$ is a Hilbert $\mathcal{A}$-module and $\mathcal{K}$ is a Hilbert $\mathcal{B}$-module. We want to define $\mathcal{H}\otimes\mathcal{K}$ as a Hilbert $(\mathcal{A}\otimes\mathcal{B})$-module. Start by forming the algebraic tensor product $\mathcal{H}\otimes_{alg}\mathcal{K}$ of the vector spaces $\mathcal{H}$, $\mathcal{K}$ (over $\mathbb{C}$). This is a left module over $(\mathcal{A}\otimes_{alg}\mathcal{B})$ (the module action being given by $(a\otimes b)(x\otimes y)=ax\otimes by$ $(a\in\mathcal{A},b\in\mathcal{B},x\in\mathcal{H},y\in\mathcal{K})$). For $(x_{1},x_{2}\in\mathcal{H},y_{1},y_{2}\in\mathcal{K})$ we define $ \langle x_{1}\otimes y_{1},x_{2}\otimes y_{2}\rangle_{\mathcal{A}\otimes\mathcal{B}}=\langle x_{1},x_{2}\rangle_{\mathcal{A}}\otimes\langle y_{1},y_{2}\rangle_{\mathcal{B}} $. We also know that for $ z=\sum_{i=1}^{n}x_{i}\otimes y_{i} $ in $\mathcal{H}\otimes_{alg}\mathcal{K}$ we have $ \langle z,z\rangle_{\mathcal{A}\otimes\mathcal{B}}=\sum_{i,j}\langle x_{i},x_{j}\rangle_{\mathcal{A}}\otimes\langle y_{i},y_{j}\rangle_{\mathcal{B}}\geq0 $ and $ \langle z,z\rangle_{\mathcal{A}\otimes\mathcal{B}}=0 $ iff $z=0$.
This extends by linearity to an $(\mathcal{A}\otimes_{alg}\mathcal{B})$-valued sesquilinear form on $\mathcal{H}\otimes_{alg}\mathcal{K}$, which makes $\mathcal{H}\otimes_{alg}\mathcal{K}$ into a semi-inner-product module over the pre-$\mathcal{C}^{\ast}$-algebra $(\mathcal{A}\otimes_{alg}\mathcal{B})$.
The semi-inner-product on $\mathcal{H}\otimes_{alg}\mathcal{K}$ is actually an inner product, see \cite{Lan}. Then $\mathcal{H}\otimes_{alg}\mathcal{K}$ is an inner-product module over the pre-$\mathcal{C}^{\ast}$-algebra $(\mathcal{A}\otimes_{alg}\mathcal{B})$, and we can perform the double completion discussed in chapter 1 of \cite{Lan} to conclude that the completion $\mathcal{H}\otimes\mathcal{K}$ of $\mathcal{H}\otimes_{alg}\mathcal{K}$ is a Hilbert $(\mathcal{A}\otimes\mathcal{B})$-module. We call $ \mathcal{H}\otimes\mathcal{K} $ the exterior tensor product of $\mathcal{H}$ and $\mathcal{K}$. With $\mathcal{H}$ ,$\mathcal{K}$ as above, we wish to investigate the adjointable operators on $ \mathcal{H}\otimes\mathcal{K} $. Suppose that $S\in End_{\mathcal{A}}^{\ast}(\mathcal{H})$ and $T\in End_{\mathcal{B}}^{\ast}(\mathcal{K})$. We define a linear operator $S\otimes T$ on $ \mathcal{H}\otimes\mathcal{K} $ by $S\otimes T(x\otimes y)=Sx\otimes Ty  (x\in\mathcal{H} ,y\in\mathcal{K})$. It is a routine verification that  is $S^{\ast}\otimes T^{\ast}$ is the adjoint of $S\otimes T$ , so in fact $S\otimes T\in End_{\mathcal{A\otimes B}}^{\ast}(\mathcal{H}\otimes\mathcal{K})$. For more details see \cite{Dav,Lan}. We note that if $a\in\mathcal{A}^{+}$ and $b\in\mathcal{B}^{+}$ , then $a\otimes b\in(\mathcal{A}\otimes\mathcal{B})^{+}$. Plainly if $a$ , $b$ are Hermitian elements of $\mathcal{A}$ and $a\geq b$ , then for every positive element $x$ of $\mathcal{B}$, we have $a\otimes x\geq b\otimes x$.
\begin{theorem}
	Let $\mathcal{H}$ and $\mathcal{K}$ be two Hilbert $C^{\ast}$-modules over unitary $C^{\ast}$-algebras $\mathcal{A}$ and $\mathcal{B}$, respectively. Let $\{\Lambda_{i}\}_{i\in I}\subset End_{\mathcal{A}}^{\ast}(\mathcal{H},V_{i})$ and $\{\Gamma_{j}\}_{j\in J}\subset End_{\mathcal{B}}^{\ast}(\mathcal{K},W_{i})$ be two $\ast$-g-frames for $\mathcal{H}$ and $\mathcal{K}$ with $\ast$-g-frame operators $S_{\Lambda}$ and $S_{\Gamma}$ and $\ast$-g-frame bounds $(A,B)$ and $(C,D)$ respectively. Then $\{\Lambda_{i}\otimes\Gamma_{j}\}_{i\in I,j\in J}  $ is a $\ast$-g-frame for Hibert $\mathcal{A}\otimes\mathcal{B}$-module $\mathcal{H}\otimes\mathcal{K}$ with $\ast$-g-frame operator $ S_{\Lambda}\otimes S_{\Gamma}$ and lower and upper $\ast$-g-frame bounds $A\otimes C$ and $ B\otimes D $, respectively.
\end{theorem}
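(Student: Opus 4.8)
The plan is to verify the frame inequality first on elementary tensors $z=x\otimes y$ and then to promote it to the whole module. The computation rests on one identity: since $(\Lambda_i\otimes\Gamma_j)(x\otimes y)=\Lambda_i x\otimes\Gamma_j y$, the definition of the inner product on $\mathcal{H}\otimes\mathcal{K}$ gives $\langle(\Lambda_i\otimes\Gamma_j)(x\otimes y),(\Lambda_i\otimes\Gamma_j)(x\otimes y)\rangle_{\mathcal{A}\otimes\mathcal{B}}=\langle\Lambda_i x,\Lambda_i x\rangle_{\mathcal{A}}\otimes\langle\Gamma_j y,\Gamma_j y\rangle_{\mathcal{B}}$. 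Summing over $i\in I$ and $j\in J$ and using bilinearity of $\otimes$ together with norm-convergence of the two defining series, I would obtain
\[
\sum_{i,j}\langle(\Lambda_i\otimes\Gamma_j)(x\otimes y),(\Lambda_i\otimes\Gamma_j)(x\otimes y)\rangle_{\mathcal{A}\otimes\mathcal{B}}=\Big(\sum_{i}\langle\Lambda_i x,\Lambda_i x\rangle_{\mathcal{A}}\Big)\otimes\Big(\sum_{j}\langle\Gamma_j y,\Gamma_j y\rangle_{\mathcal{B}}\Big).
\]

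Next I would sandwich this. Writing $P=\sum_i\langle\Lambda_i x,\Lambda_i x\rangle_{\mathcal{A}}$ and $Q=\sum_j\langle\Gamma_j y,\Gamma_j y\rangle_{\mathcal{B}}$, the two $\ast$-g-frame hypotheses give $A\langle x,x\rangle_{\mathcal{A}}A^{\ast}\le P\le B\langle x,x\rangle_{\mathcal{A}}B^{\ast}$ and $C\langle y,y\rangle_{\mathcal{B}}C^{\ast}\le Q\le D\langle y,y\rangle_{\mathcal{B}}D^{\ast}$, where all six elements are positive. Applying the monotonicity of the spatial tensor product recorded just before the theorem twice --- first in the left leg with the fixed positive element $Q$, then in the right leg with the fixed positive element $B\langle x,x\rangle_{\mathcal{A}}B^{\ast}$ --- yields $P\otimes Q\le(B\langle x,x\rangle_{\mathcal{A}}B^{\ast})\otimes(D\langle y,y\rangle_{\mathcal{B}}D^{\ast})$, and symmetrically the lower estimate. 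A final rewriting using multiplicativity of the spatial tensor product and $(B\otimes D)^{\ast}=B^{\ast}\otimes D^{\ast}$ gives $(B\langle x,x\rangle_{\mathcal{A}}B^{\ast})\otimes(D\langle y,y\rangle_{\mathcal{B}}D^{\ast})=(B\otimes D)\langle x\otimes y,x\otimes y\rangle_{\mathcal{A}\otimes\mathcal{B}}(B\otimes D)^{\ast}$, and likewise for $A\otimes C$, so the inequality with the claimed bounds holds on every elementary tensor.

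In parallel I would identify the frame operator. A direct computation on elementary tensors gives $\sum_{i,j}(\Lambda_i\otimes\Gamma_j)^{\ast}(\Lambda_i\otimes\Gamma_j)(x\otimes y)=\sum_{i,j}\Lambda_i^{\ast}\Lambda_i x\otimes\Gamma_j^{\ast}\Gamma_j y=S_{\Lambda}x\otimes S_{\Gamma}y=(S_{\Lambda}\otimes S_{\Gamma})(x\otimes y)$, so $S_{\Lambda\otimes\Gamma}$ and $S_{\Lambda}\otimes S_{\Gamma}$ agree on elementary tensors; since both are bounded adjointable operators and the elementary tensors span a dense submodule, they coincide on all of $\mathcal{H}\otimes\mathcal{K}$. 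The adjoint identity $\langle S_{\Lambda\otimes\Gamma}z,z\rangle_{\mathcal{A}\otimes\mathcal{B}}=\sum_{i,j}\langle(\Lambda_i\otimes\Gamma_j)z,(\Lambda_i\otimes\Gamma_j)z\rangle_{\mathcal{A}\otimes\mathcal{B}}$ then lets me read off the middle term of the target inequality as $\langle(S_{\Lambda}\otimes S_{\Gamma})z,z\rangle_{\mathcal{A}\otimes\mathcal{B}}$ for arbitrary $z$.

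I expect the real obstacle to be the passage from elementary tensors to a general $z\in\mathcal{H}\otimes\mathcal{K}$. In the ordinary g-frame case the bounds $C\langle z,z\rangle\le\langle Sz,z\rangle\le D\langle z,z\rangle$ are genuine operator inequalities $CI\le S\le DI$ and therefore hold automatically on the whole module; here, by contrast, the outer expressions $(A\otimes C)\langle z,z\rangle(A\otimes C)^{\ast}$ are quadratic in $z$ and, because the algebras are noncommutative, are not of the form $\langle Tz,z\rangle$ for a fixed adjointable $T$. Consequently positivity of the relevant difference on the rank-one elementary tensors does not by itself propagate to finite sums $z=\sum_k x_k\otimes y_k$, where cross terms $\langle x_k,x_l\rangle_{\mathcal{A}}\otimes\langle y_k,y_l\rangle_{\mathcal{B}}$ enter. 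My strategy would be to establish the inequality first on the algebraic tensor product $\mathcal{H}\otimes_{alg}\mathcal{K}$, controlling these cross terms, and then to extend to the completion by continuity of $z\mapsto\langle(S_{\Lambda}\otimes S_{\Gamma})z,z\rangle$ and $z\mapsto(B\otimes D)\langle z,z\rangle(B\otimes D)^{\ast}$ together with closedness of the positive cone of $\mathcal{A}\otimes\mathcal{B}$; it is this finite-sum step, rather than the elementary-tensor computation, where the argument must do its genuine work.
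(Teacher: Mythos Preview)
Your approach is essentially identical to the paper's: establish the $\ast$-g-frame inequality on elementary tensors by tensoring the two given inequalities, identify the frame operator on elementary tensors, and then pass to the completion by density. You have in fact been more careful than the paper at the one point that matters: the paper simply asserts that ``the last inequality is satisfied for every finite sum of elements in $\mathcal{H}\otimes_{alg}\mathcal{K}$ and then it's satisfied for all $z\in\mathcal{H}\otimes\mathcal{K}$,'' offering no argument whatsoever for the finite-sum step and hence no treatment of the cross terms $\langle x_k,x_l\rangle_{\mathcal{A}}\otimes\langle y_k,y_l\rangle_{\mathcal{B}}$ that you correctly isolate as the real obstacle. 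So your proposal matches the paper's proof and goes beyond it in diagnostic precision; the gap you worry about is present, unaddressed, in the published argument as well.
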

\begin{proof}
	By the definition of $\ast$-g-frames $\{\Lambda_{i}\}_{i\in I} $ and $\{\Gamma_{j}\}_{j\in J}$ we have 
	$$A\langle x,x\rangle_{\mathcal{A}} A^{\ast}\leq\sum_{i\in I}\langle \Lambda_{i}x,\Lambda_{i}x\rangle_{\mathcal{A}}\leq B\langle x,x\rangle_{\mathcal{A}} B^{\ast} , \forall x\in\mathcal{H}.$$
	$$C\langle y,y\rangle_{\mathcal{B}} C^{\ast}\leq\sum_{j\in J}\langle \Gamma_{j}y,\Gamma_{j}y\rangle_{\mathcal{B}}\leq D\langle y,y\rangle_{\mathcal{B}} D^{\ast} , \forall y\in\mathcal{K}.$$
	Therefore
	$$
	\aligned
	(A\langle x,x\rangle_{\mathcal{A}} A^{\ast})\otimes (C\langle y,y\rangle_{\mathcal{B}} C^{\ast})&\leq\sum_{i\in I}\langle \Lambda_{i}x,\Lambda_{i}x\rangle_{\mathcal{A}}\otimes\sum_{j\in J}\langle \Gamma_{j}y,\Gamma_{j}y\rangle_{\mathcal{B}}\\
	&\leq (B\langle x,x\rangle_{\mathcal{A}} B^{\ast})\otimes (D\langle y,y\rangle_{\mathcal{B}} D^{\ast}) , \forall x\in\mathcal{H} ,\forall y\in\mathcal{K}.
	\endaligned
	$$
	Then
	$$
	\aligned
	(A\otimes C)(\langle x,x\rangle_{\mathcal{A}}\otimes\langle y,y\rangle_{\mathcal{B}}) (A^{\ast}\otimes C^{\ast})&\leq\sum_{i\in I,j\in J}\langle \Lambda_{i}x,\Lambda_{i}x\rangle_{\mathcal{A}}\otimes\langle \Gamma_{j}y,\Gamma_{j}y\rangle_{\mathcal{B}}\\
	&\leq (B\otimes D)(\langle x,x\rangle_{\mathcal{A}}\otimes\langle y,y\rangle_{\mathcal{B}}) (B^{\ast}\otimes D^{\ast}) , \forall x\in\mathcal{H} ,\forall y\in\mathcal{K}.
	\endaligned
	$$
	Consequently we have
	$$
	\aligned
	(A\otimes C)\langle x\otimes y,x\otimes y\rangle_{\mathcal{A\otimes B}} (A\otimes C)^{\ast}&\leq\sum_{i\in I,j\in J}\langle\Lambda_{i}x\otimes\Gamma_{j}y,\Lambda_{i}x\otimes\Gamma_{j}y\rangle_{\mathcal{A\otimes B}} \\
	&\leq (B\otimes D)\langle x\otimes y,x\otimes y\rangle_{\mathcal{A\otimes B}} (B\otimes D)^{\ast}, \forall x\in\mathcal{H}, \forall y\in\mathcal{K}.
	\endaligned
	$$
	Then for all $x\otimes y\in\mathcal{H\otimes K}$ we have
	$$
	\aligned
	(A\otimes C)\langle x\otimes y,x\otimes y\rangle_{\mathcal{A\otimes B}} (A\otimes C)^{\ast}&\leq\sum_{i\in I,j\in J}\langle(\Lambda_{i}\otimes\Gamma_{j})(x\otimes y),(\Lambda_{i}\otimes\Gamma_{j})(x\otimes y)\rangle_{\mathcal{A\otimes B}} \\
	&\leq (B\otimes D)\langle x\otimes y,x\otimes y\rangle_{\mathcal{A\otimes B}} (B\otimes D)^{\ast}.
	\endaligned
	$$
	The last inequality is satisfied for every finite sum of elements in $\mathcal{H}\otimes_{alg}\mathcal{K}$ and then it's satisfied for all $z\in\mathcal{H\otimes K}$. It shows that $\{\Lambda_{i}\otimes\Gamma_{j}\}_{i\in I,j\in J}  $ is $\ast$-g-frame for Hibert $\mathcal{A}\otimes\mathcal{B}$-module $\mathcal{H}\otimes\mathcal{K}$ with lower and upper $\ast$-g-frame bounds $A\otimes C$ and $ B\otimes D $, respectively.\\
	By the definition of $\ast$-g-frame operator $S_{\Lambda}$ and $S_{\Gamma}$ we have:$$S_{\Lambda}x=\sum_{i\in I}\Lambda_{i}^{\ast}\Lambda_{i}x, \forall x\in\mathcal{H}.$$
	$$S_{\Gamma}y=\sum_{j\in J}\Gamma_{j}^{\ast}\Gamma_{j}y, \forall y\in\mathcal{K}.$$
	Therefore
	$$
	\aligned
	(S_{\Lambda}\otimes S_{\Gamma})(x\otimes y)&=S_{\Lambda}x\otimes S_{\Gamma}y\\
	&=\sum_{i\in I}\Lambda_{i}^{\ast}\Lambda_{i}x\otimes\sum_{j\in J}\Gamma_{j}^{\ast}\Gamma_{j}y\\
	&=\sum_{i\in I,j\in J}\Lambda_{i}^{\ast}\Lambda_{i}x\otimes\Gamma_{j}^{\ast}\Gamma_{j}y\\
	&=\sum_{i\in I,j\in J}(\Lambda_{i}^{\ast}\otimes\Gamma_{j}^{\ast})(\Lambda_{i}x\otimes\Gamma_{j}y)\\
	&=\sum_{i\in I,j\in J}(\Lambda_{i}^{\ast}\otimes\Gamma_{j}^{\ast})(\Lambda_{i}\otimes\Gamma_{j})(x\otimes y)\\
	&=\sum_{i\in I,j\in J}(\Lambda_{i}\otimes\Gamma_{j})^{\ast})(\Lambda_{i}\otimes\Gamma_{j})(x\otimes y).
	\endaligned
	$$
	Now by the uniqueness of $\ast$-g-frame operator, the last expression is equal to $S_{\Lambda\otimes\Gamma}(x\otimes y)$. Consequently we have $ (S_{\Lambda}\otimes S_{\Gamma})(x\otimes y)=S_{\Lambda\otimes\Gamma}(x\otimes y)$. The last equality is satisfied for every finite sum of elements in $\mathcal{H}\otimes_{alg}\mathcal{K}$ and then it's satisfied for all $z\in\mathcal{H\otimes K}$. It shows that $ (S_{\Lambda}\otimes S_{\Gamma})(z)=S_{\Lambda\otimes\Gamma}(z)$. So $S_{\Lambda\otimes\Gamma}=S_{\Lambda}\otimes S_{\Gamma}$.
\end{proof}
\begin{theorem}
	If $Q\in End_{\mathcal{A}}^{\ast}(\mathcal{H})$ is invertible and $\{\Lambda_{i}\}_{i\in I}\subset End_{\mathcal{A\otimes B}}^{\ast}(\mathcal{H\otimes K})$ is a $\ast$-g-frame for $\mathcal{H\otimes K}$ with lower and upper  $\ast$-g-frame bounds $A$ and $B$ respectively and $\ast$-g-frame operator $S$, then $\{\Lambda_{i}(Q^{\ast}\otimes I)\}_{i\in I} $ is a $\ast$-g-frame for $\mathcal{H\otimes K}$ with lower and upper $\ast$-g-frame bounds $\|Q^{\ast-1}\|^{-1}A$ and $\|Q\|B$ respectively and $\ast$-g-frame operator $(Q\otimes I)S(Q^{\ast}\otimes I)$.
\end{theorem}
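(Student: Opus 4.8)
The plan is to reduce the statement to the $\ast$-g-frame inequality already known for $\{\Lambda_{i}\}_{i\in I}$. Fix $z\in\mathcal{H}\otimes\mathcal{K}$ and put $w=(Q^{\ast}\otimes I)z$. Since $\Lambda_{i}(Q^{\ast}\otimes I)z=\Lambda_{i}w$, the frame sum of the candidate system at $z$ coincides with the frame sum of $\{\Lambda_{i}\}$ at $w$, so applying the hypothesis to $w$ gives at once
$$A\langle w,w\rangle_{\mathcal{A\otimes B}}A^{\ast}\leq\sum_{i\in I}\langle\Lambda_{i}(Q^{\ast}\otimes I)z,\Lambda_{i}(Q^{\ast}\otimes I)z\rangle_{\mathcal{A\otimes B}}\leq B\langle w,w\rangle_{\mathcal{A\otimes B}}B^{\ast}.$$
Thus the entire problem is to sandwich $\langle w,w\rangle_{\mathcal{A\otimes B}}=\langle(Q^{\ast}\otimes I)z,(Q^{\ast}\otimes I)z\rangle_{\mathcal{A\otimes B}}$ between scalar multiples of $\langle z,z\rangle_{\mathcal{A\otimes B}}$.

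The second step is the estimate
$$\|Q^{\ast-1}\|^{-2}\langle z,z\rangle_{\mathcal{A\otimes B}}\leq\langle(Q^{\ast}\otimes I)z,(Q^{\ast}\otimes I)z\rangle_{\mathcal{A\otimes B}}\leq\|Q\|^{2}\langle z,z\rangle_{\mathcal{A\otimes B}}.$$
I would obtain it by rewriting the middle term, using that $Q\otimes I$ is the adjoint of $Q^{\ast}\otimes I$, as $\langle z,(QQ^{\ast}\otimes I)z\rangle_{\mathcal{A\otimes B}}$. The operator $QQ^{\ast}\in End_{\mathcal{A}}^{\ast}(\mathcal{H})$ is positive and invertible with $\|(QQ^{\ast})^{-1}\|=\|Q^{\ast-1}\|^{2}$, so its spectrum lies in $[\|Q^{\ast-1}\|^{-2},\|Q\|^{2}]$ and hence $\|Q^{\ast-1}\|^{-2}I\leq QQ^{\ast}\leq\|Q\|^{2}I$. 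Tensoring these operator inequalities with $I$ (preserved under tensoring as noted before the theorem) and pairing against $z$ yields exactly the displayed bound; this is the squared form of Lemma \ref{2.6}, and I prefer to argue through $QQ^{\ast}$ rather than to square the $\mathcal{A}\otimes\mathcal{B}$-valued inequality of Lemma \ref{2.6} directly, since the square map is not operator monotone.

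With these two ingredients the conclusion is routine. Multiplying the inner-product estimate on the left by $A$ and on the right by $A^{\ast}$ (respectively by $B$ and $B^{\ast}$) preserves order, and combining with the first display gives
$$(\|Q^{\ast-1}\|^{-1}A)\langle z,z\rangle_{\mathcal{A\otimes B}}(\|Q^{\ast-1}\|^{-1}A)^{\ast}\leq\sum_{i\in I}\langle\Lambda_{i}(Q^{\ast}\otimes I)z,\Lambda_{i}(Q^{\ast}\otimes I)z\rangle_{\mathcal{A\otimes B}}\leq(\|Q\|B)\langle z,z\rangle_{\mathcal{A\otimes B}}(\|Q\|B)^{\ast},$$
because the scalars $\|Q^{\ast-1}\|^{-1}$ and $\|Q\|$ are real and commute with everything. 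Since $\|Q^{\ast-1}\|^{-1}A$ and $\|Q\|B$ are again strictly nonzero elements of $\mathcal{A}\otimes\mathcal{B}$, this is precisely the $\ast$-g-frame inequality with the asserted bounds.

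Finally, for the frame operator I would compute directly from $(\Lambda_{i}(Q^{\ast}\otimes I))^{\ast}=(Q\otimes I)\Lambda_{i}^{\ast}$ that $\sum_{i\in I}(\Lambda_{i}(Q^{\ast}\otimes I))^{\ast}\Lambda_{i}(Q^{\ast}\otimes I)z=(Q\otimes I)\big(\sum_{i\in I}\Lambda_{i}^{\ast}\Lambda_{i}\big)(Q^{\ast}\otimes I)z=(Q\otimes I)S(Q^{\ast}\otimes I)z$, so the new frame operator is $(Q\otimes I)S(Q^{\ast}\otimes I)$. The one delicate point in the argument is the passage from the norm form of Lemma \ref{2.6} to the inner-product bound, which is exactly why I route it through the spectral estimate for $QQ^{\ast}$; everything else is order manipulation with positive elements.
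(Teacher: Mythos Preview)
Your argument is correct and follows the same overall scheme as the paper: apply the $\ast$-g-frame inequality for $\{\Lambda_i\}$ at the point $(Q^{\ast}\otimes I)z$, then convert the resulting bounds on $\langle (Q^{\ast}\otimes I)z,(Q^{\ast}\otimes I)z\rangle$ into bounds on $\langle z,z\rangle$, and finally compute the frame operator directly. The one substantive difference is in how that conversion is justified. The paper invokes Lemma~\ref{2.6} in its absolute-value form $\|Q^{\ast-1}\|^{-1}|z|\leq |(Q^{\ast}\otimes I)z|\leq \|Q\|\,|z|$ and then passes immediately to the squared inequality, a step that is not innocuous since $a\mapsto a^{2}$ is not operator monotone on positive elements. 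You avoid this by writing $\langle (Q^{\ast}\otimes I)z,(Q^{\ast}\otimes I)z\rangle=\langle z,(QQ^{\ast}\otimes I)z\rangle$ and using the spectral estimate $\|Q^{\ast-1}\|^{-2}I\leq QQ^{\ast}\leq \|Q\|^{2}I$, which yields the needed inner-product bound directly and is in fact how Lemma~\ref{2.6} itself is proved in the cited source before taking square roots. So your route is the same in outline but cleaner at the one technically delicate point; the frame-operator computation is identical to the paper's.
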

\begin{proof}
	Since $Q\in End_{\mathcal{A}}^{\ast}(\mathcal{H})$, $Q\otimes I\in End_{\mathcal{A\otimes B}}^{\ast}(\mathcal{H\otimes K})$ with inverse $Q^{-1}\otimes I$. It is obvious that the adjoint of $Q\otimes I$ is $Q^{\ast}\otimes I$. An easy calculation shows that for every elementary tensor $x\otimes y$,
	$$
	\aligned
	\|(Q\otimes I)(x\otimes y)\|^{2}&=\|Q(x)\otimes y\|^{2}\\
	&=\|Q(x)\|^{2}\|y\|^{2}\\
	&\leq\|Q\|^{2}\|x\|^{2}\|y\|^{2}\\
	&=\|Q\|^{2}\|x\otimes y\|^{2}.
	\endaligned
	$$
	So $Q\otimes I$ is bounded, and therefore it can be extended to $\mathcal{H\otimes K}$. Similarly for $Q^{\ast}\otimes I$, hence $Q\otimes I$ is $\mathcal{A\otimes B}$-linear, adjointable with adjoint $Q^{\ast}\otimes I$. Hence for every $z\in\mathcal{H\otimes K}$ we have by lemma \ref{2.6} $$\|Q^{\ast-1}\|^{-1}.|z|\leq|(Q^{\ast}\otimes I)z|\leq\|Q\|.|z|.$$
	By the definition of $\ast$-g-frames we have 
	$$A\langle z,z\rangle_{\mathcal{A\otimes B}} A^{\ast}\leq\sum_{i\in I}\langle \Lambda_{i}z,\Lambda_{i}z\rangle_{\mathcal{A\otimes B}}\leq B\langle z,z\rangle_{\mathcal{A\otimes B}} B^{\ast}.$$
	Then 
	$$
	\aligned
	A\langle (Q^{\ast}\otimes I)z,(Q^{\ast}\otimes I)z\rangle_{\mathcal{A\otimes B}} A^{\ast}&\leq\sum_{i\in I}\langle \Lambda_{i}(Q^{\ast}\otimes I)z,\Lambda_{i}(Q^{\ast}\otimes I)z\rangle_{\mathcal{A\otimes B}}\\
	&\leq B\langle (Q^{\ast}\otimes I)z,(Q^{\ast}\otimes I)z\rangle_{\mathcal{A\otimes B}} B^{\ast}.
	\endaligned
	$$
	So$$\|Q^{\ast-1}\|^{-1}A\langle z,z\rangle_{\mathcal{A\otimes B}}(\|Q^{\ast-1}\|^{-1}A)^{\ast}\leq\sum_{i\in I}\langle \Lambda_{i}(Q^{\ast}\otimes I)z,\Lambda_{i}(Q^{\ast}\otimes I)z\rangle_{\mathcal{A\otimes B}}\leq\|Q\|B\langle z,z\rangle_{\mathcal{A\otimes B}}(\|Q\|B)^{\ast}.$$
	Now
	$$
	\aligned
	(Q\otimes I)S(Q^{\ast}\otimes I)&=(Q\otimes I)(\sum_{i\in I}\Lambda_{i}^{\ast}\Lambda_{i})(Q^{\ast}\otimes I)\\
	&=\sum_{i\in I}(Q\otimes I)\Lambda_{i}^{\ast}\Lambda_{i}(Q^{\ast}\otimes I)\\
	&=\sum_{i\in I}(\Lambda_{i}(Q^{\ast}\otimes I))^{\ast}\Lambda_{i}(Q^{\ast}\otimes I).
	\endaligned
	$$
	Which completes the proof.
\end{proof}
\begin{theorem}
	Let $(\mathcal{H},\mathcal{A},\langle.,.\rangle_{\mathcal{A}})$ and $(\mathcal{H},\mathcal{B},\langle.,.\rangle_{\mathcal{B}})$ be two Hilbert $\mathcal{C^{\ast}}$-modules and let $\varphi :\mathcal{A}\longrightarrow \mathcal{B}$ be a $\ast$-homomorphism and $\theta$ be a map on $\mathcal{H}$ such that $\langle \theta x,\theta y\rangle_{\mathcal{B}}=\varphi(\langle x, y\rangle_{\mathcal{A}})$ for all $x,y\in\mathcal{H}$. Also, suppose that $\{\Lambda_{i}\}_{i\in I}\subset End_{\mathcal{A}}^{\ast}(\mathcal{H},V_{i})$ (where $V_{i}=\mathcal{H}$ for each $i$ in $I$) is a $\ast$-g-frame for $(\mathcal{H},\mathcal{A},\langle.,.\rangle_{\mathcal{A}})$ with $\ast$-g-frame operator $S_{\mathcal{A}} $ and lower and upper $\ast$-g-frame bounds $A$ , $B$  respectively. If $\theta$ is surjective and $\theta\Lambda_{i}=\Lambda_{i}\theta$ for each $i$ in $I$, then $\{\Lambda_{i}\}_{i\in I}$ is a $\ast$-g-frame for $(\mathcal{H},\mathcal{B},\langle.,.\rangle_{\mathcal{B}})$ with $\ast$-g-frame operator $S_{\mathcal{B}} $ and lower and upper $\ast$-g-frame bounds $\varphi(A)$ ,$\varphi(B)$ respectively, and $\langle S_{\mathcal{B}}\theta x,\theta y\rangle_{\mathcal{B}}=\varphi(\langle S_{\mathcal{A}}x, y\rangle_{\mathcal{A}})$.
\end{theorem}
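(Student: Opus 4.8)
The plan is to transport the $\mathcal{A}$-frame inequality to the $\mathcal{B}$-setting by applying $\varphi$ term by term, then to convert the result using the two structural hypotheses on $\theta$, namely $\langle\theta x,\theta y\rangle_{\mathcal{B}}=\varphi(\langle x,y\rangle_{\mathcal{A}})$ and $\theta\Lambda_{i}=\Lambda_{i}\theta$. First I would start from the defining inequality of the $\ast$-g-frame for $(\mathcal{H},\mathcal{A},\langle.,.\rangle_{\mathcal{A}})$,
$$A\langle x,x\rangle_{\mathcal{A}}A^{\ast}\leq\sum_{i\in I}\langle\Lambda_{i}x,\Lambda_{i}x\rangle_{\mathcal{A}}\leq B\langle x,x\rangle_{\mathcal{A}}B^{\ast},$$
and apply $\varphi$. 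Since $\varphi$ is a $\ast$-homomorphism it is order preserving by Lemma \ref{2.7}, it is multiplicative and adjoint preserving, and (being contractive) it is continuous, so it may be pulled inside the norm-convergent sum of the standard frame. Using $\varphi(a c a^{\ast})=\varphi(a)\varphi(c)\varphi(a)^{\ast}$ together with $\varphi(\langle x,x\rangle_{\mathcal{A}})=\langle\theta x,\theta x\rangle_{\mathcal{B}}$, the outer terms become $\varphi(A)\langle\theta x,\theta x\rangle_{\mathcal{B}}\varphi(A)^{\ast}$ and $\varphi(B)\langle\theta x,\theta x\rangle_{\mathcal{B}}\varphi(B)^{\ast}$.

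Next I would rewrite the middle term. For each $i$, the relation on $\theta$ gives $\varphi(\langle\Lambda_{i}x,\Lambda_{i}x\rangle_{\mathcal{A}})=\langle\theta\Lambda_{i}x,\theta\Lambda_{i}x\rangle_{\mathcal{B}}$, and the commutation $\theta\Lambda_{i}=\Lambda_{i}\theta$ turns this into $\langle\Lambda_{i}\theta x,\Lambda_{i}\theta x\rangle_{\mathcal{B}}$. Hence
$$\varphi(A)\langle\theta x,\theta x\rangle_{\mathcal{B}}\varphi(A)^{\ast}\leq\sum_{i\in I}\langle\Lambda_{i}\theta x,\Lambda_{i}\theta x\rangle_{\mathcal{B}}\leq\varphi(B)\langle\theta x,\theta x\rangle_{\mathcal{B}}\varphi(B)^{\ast}$$
for every $x\in\mathcal{H}$. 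Because $\theta$ is surjective, every element of the $\mathcal{B}$-module $\mathcal{H}$ has the form $\theta x$, so the inequality holds for all vectors, which is precisely the $\ast$-g-frame condition for $(\mathcal{H},\mathcal{B},\langle.,.\rangle_{\mathcal{B}})$ with bounds $\varphi(A)$ and $\varphi(B)$.

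Finally, for the frame-operator identity I would expand both sides on elementary data. Writing $S_{\mathcal{A}}x=\sum_{i}\Lambda_{i}^{\ast_{\mathcal{A}}}\Lambda_{i}x$ and using the $\mathcal{A}$-adjoint one gets $\langle S_{\mathcal{A}}x,y\rangle_{\mathcal{A}}=\sum_{i}\langle\Lambda_{i}x,\Lambda_{i}y\rangle_{\mathcal{A}}$; applying $\varphi$ and the two relations on $\theta$ yields $\varphi(\langle S_{\mathcal{A}}x,y\rangle_{\mathcal{A}})=\sum_{i}\langle\Lambda_{i}\theta x,\Lambda_{i}\theta y\rangle_{\mathcal{B}}$. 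On the other hand, using the $\mathcal{B}$-adjoint, $\langle S_{\mathcal{B}}\theta x,\theta y\rangle_{\mathcal{B}}=\sum_{i}\langle\Lambda_{i}\theta x,\Lambda_{i}\theta y\rangle_{\mathcal{B}}$, and the two expressions coincide, giving $\langle S_{\mathcal{B}}\theta x,\theta y\rangle_{\mathcal{B}}=\varphi(\langle S_{\mathcal{A}}x,y\rangle_{\mathcal{A}})$.

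The routine algebra aside, the points that need genuine care are the two that make the transfer legitimate: justifying the interchange of $\varphi$ with the infinite sum, which I would handle by invoking the norm-convergence of the standard frame together with the contractivity of a $\ast$-homomorphism; and keeping the two distinct adjoint structures straight, since $\Lambda_{i}^{\ast_{\mathcal{A}}}$ and $\Lambda_{i}^{\ast_{\mathcal{B}}}$ are a priori different operators. What rescues the computation is that both frame operators collapse to the \emph{same} symmetric sum $\sum_{i}\langle\Lambda_{i}\theta x,\Lambda_{i}\theta y\rangle_{\mathcal{B}}$, so the adjoints never have to be compared directly. I would also flag the implicit hypotheses that each $\Lambda_{i}$ be $\mathcal{B}$-adjointable and that $\varphi(A),\varphi(B)$ remain strictly nonzero in $\mathcal{B}$, which the statement is tacitly assuming.
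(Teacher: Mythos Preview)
Your proposal is correct and follows essentially the same route as the paper's proof: apply $\varphi$ to the $\mathcal{A}$-frame inequality via Lemma~\ref{2.7}, distribute it using the $\ast$-homomorphism property, convert inner products via $\langle\theta x,\theta y\rangle_{\mathcal{B}}=\varphi(\langle x,y\rangle_{\mathcal{A}})$, commute $\theta$ past each $\Lambda_{i}$, and invoke surjectivity; the frame-operator identity is handled identically by expanding both sides as $\sum_{i}\langle\Lambda_{i}\theta x,\Lambda_{i}\theta y\rangle_{\mathcal{B}}$. Your added remarks on justifying the interchange of $\varphi$ with the infinite sum, on the two adjoint structures, and on the tacit hypotheses about $\mathcal{B}$-adjointability and strict nonvanishing of $\varphi(A),\varphi(B)$ are points the paper itself passes over in silence.
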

\begin{proof} Let $y\in\mathcal{H}$ then there exists $x\in\mathcal{H}$ such that $\theta x=y$ ($\theta$ is surjective). By the definition of $\ast$-g-frames we have
	$$A\langle x,x\rangle_{\mathcal{A}} A^{\ast}\leq\sum_{i\in I}\langle \Lambda_{i}x,\Lambda_{i}x\rangle_{\mathcal{A}}\leq B\langle x,x\rangle_{\mathcal{A}} B^{\ast}.$$
	By lemma \ref{2.7} we have
	$$\varphi(A\langle x,x\rangle_{\mathcal{A}} A^{\ast})\leq\varphi(\sum_{i\in I}\langle \Lambda_{i}x,\Lambda_{i}x\rangle_{\mathcal{A}})\leq\varphi( B\langle x,x\rangle_{\mathcal{A}} B^{\ast}).$$
	By the definition of $\ast$-homomorphism we have
	$$\varphi(A)\varphi(\langle x,x\rangle_{\mathcal{A}}) \varphi(A^{\ast})\leq\sum_{i\in I}\varphi(\langle \Lambda_{i}x,\Lambda_{i}x\rangle_{\mathcal{A}})\leq\varphi( B)\varphi(\langle x,x\rangle_{\mathcal{A}}) \varphi(B^{\ast}).$$
	By the relation betwen $\theta$ and $\varphi$ we get
	$$\varphi(A)\langle \theta x,\theta x\rangle_{\mathcal{B}} \varphi(A)^{\ast}\leq\sum_{i\in I}\langle \theta\Lambda_{i}x,\theta\Lambda_{i}x\rangle_{\mathcal{B}}\leq\varphi( B)\langle\theta x,\theta x\rangle_{\mathcal{B}} \varphi(B)^{\ast}.$$
	By the relation betwen $\theta$ and $\Lambda_{i}$ we have
	$$\varphi(A)\langle \theta x,\theta x\rangle_{\mathcal{B}} \varphi(A)^{\ast}\leq\sum_{i\in I}\langle \Lambda_{i}\theta x,\Lambda_{i}\theta x\rangle_{\mathcal{B}}\leq\varphi( B)\langle\theta x,\theta x\rangle_{\mathcal{B}} \varphi(B)^{\ast}.$$
	Then
	$$
	\varphi(A)\langle  y, y\rangle_{\mathcal{B}} (\varphi(A))^{\ast}\leq\sum_{i\in I}\langle \Lambda_{i}y,\Lambda_{i}y\rangle_{\mathcal{B}}
	\leq\varphi( B)\langle y,y\rangle_{\mathcal{B}} (\varphi(B))^{\ast} , \forall y\in\mathcal{H}.
	$$
	On the other hand we have
	$$
	\aligned
	\varphi(\langle S_{\mathcal{A}}x, y\rangle_{\mathcal{A}})&=\varphi(\langle\sum_{i\in I}\Lambda_{i}^{\ast}\Lambda_{i}x,y\rangle_{\mathcal{A}})\\
	&=\sum_{i\in I}\varphi(\langle\Lambda_{i}x,\Lambda_{i}y\rangle_{\mathcal{A}})\\
	&=\sum_{i\in I}\langle\theta\Lambda_{i}x,\theta\Lambda_{i}y\rangle_{\mathcal{B}}
	\\
	&=\sum_{i\in I}\langle\Lambda_{i}\theta x,\Lambda_{i}\theta y\rangle_{\mathcal{B}}\\
	&=\langle\sum_{i\in I}\Lambda_{i}^{\ast}\Lambda_{i}\theta x,\theta y\rangle_{\mathcal{B}}\\
	&=\langle S_{\mathcal{B}}\theta x,\theta y\rangle_{\mathcal{B}}.
	\endaligned
	$$
	Which completes the proof.
\end{proof}
In the following, we give an examples of the function $\varphi$ in the precedent theorem.
\begin{example}\label{Banach-Stone}\cite{Won}. 
	Let $X$ and $Y$ be two locally compact Hausdorff spaces. Let $H$ be a Hilbert space. Let $T$ be a surjective linear isometry from $C_{0}(X,H)$ onto $C_{0}(Y,H)$, then there exists $\phi:Y\rightarrow X$ a homeomorphism and $h(y):H\rightarrow H$ a unitary operator $\forall y\in Y$ such that $$Tf(y)=h(y)f(\phi(y)).$$
	
	In this case, we have:
	
	$$\langle Tf,Tg\rangle(y)=\langle Tf(y),Tg(y)\rangle=\langle h(y)f(\phi(y)),h(y)g(\phi(y))\rangle$$
	$$=\langle f(\phi(y)),g(\phi(y))\rangle=\langle f,g\rangle o\phi(y).$$
	Then$$\langle Tf,Tg\rangle=\langle f,g\rangle o\phi.$$
	Let $\varphi:C_{0}(X)\rightarrow C_{0}(Y)$ be the $\ast$-isomorphism defined by $\varphi(\psi)=\psi o\phi$. Then $$\langle Tf,Tg\rangle=\varphi(\langle f,g\rangle).$$
\end{example}
The example \ref{Banach-Stone} is a consequence of Banach-Stone's Theorem.
\begin{example}
	Let $\mathcal{A}$ be a $C^{\ast}$-algebra, then:
	\begin{itemize}
		\item $\mathcal{A}$ itself is a Hilbert $\mathcal{A}$-module with the inner product $\langle a,b\rangle_{r}:=a^{\ast}b,  (a,b\in\mathcal{A})$.
		\item $\mathcal{A}$ itself is a Hilbert $\mathcal{A}$-module with the inner product $\langle a,b\rangle_{l}:=ab^{\ast},  (a,b\in\mathcal{A})$.
	\end{itemize}
	Let $\theta:\mathcal{A}\rightarrow\mathcal{A}$ be the invertible map defined by $\theta(a)=a^{\ast}$ and we take $\varphi$ equal to the identity of $L(\mathcal{A})$. Then
	$$\langle\theta a,\theta b\rangle_{l}=\theta a(\theta b)^{\ast}=a^{\ast}b=\langle a,b\rangle_{r}=\varphi(\langle a,b\rangle_{r}).$$
\end{example}

\bibliographystyle{amsplain}

\begin{thebibliography}{99}
	\bibitem{Ali} A.Alijani,M.Dehghan, $\ast$-frames in Hilbert $\mathcal{C}^{\ast}$modules, U. P. B. Sci. Bull. Series A 2011.
	\bibitem{Asg} M. S. Asgari, A. Khorsavi, Frames and Bases of subspaces in Hilbert spaces, J. Math. Anal. Appl. 308 (2005),
	541-553.
	\bibitem{Bak} D. Bakic, B. Guljas, Hilbert $\mathcal{C}^{\ast}$-modules over $\mathcal{C}^{\ast}$-algebras of compact operators, Acta Sci. Math, 68 (2002),
	249-269.
	\bibitem{Kab} N. Bounader,S.Kabbaj, $\ast$-g-frames in hilbert $C^{\ast}$-modules, J. Math. Comput. Sci. 4, No. 2, 246-256,(2014).
	\bibitem{A} A.Alijani, Generalized Frames with $\mathcal{C}^{\ast}$-Valued Bounds and their Operator Duals, Filomat 29:7, 1469–1479
	DOI 10.2298/FIL1507469A (2015).
	\bibitem{PG} P. G. Casazza, G. Kutyniok, Frames of subspaces, Contemporary Math. 2004.
	\bibitem{OC} O. Christensen, Y. C. Eldar, Oblique dual frames and shift-invariant spaces, Appl. Comput. Harmon. Anal.
	17 (2004), 48-68.
	\bibitem{Ch} O. Christensen, An Introduction to Frames and Riesz bases, Brikhouser,2016.
	\bibitem{Dav} F. R. Davidson, $\mathcal{C}^{\ast}$-algebra by example, Fields Ins. Monog. 1996.
	\bibitem{Con} J.B.Conway, A Course In Operator Theory, AMS,V.21,2000.
	\bibitem{Duf} R. J. Duffin, A. C. Schaeffer, A class of nonharmonic fourier series, Trans. Amer. Math. Soc. 72 (1952),
	341-366.
	\bibitem{Won} M-H. Hsu, N. C. Wong, Inner products and module maps of Hilbert $\mathcal{C}^{\ast}$-modules, Matimyas Matematika, vol. 34, Nos. 1-2 (2011), 56-62. (NSYSU).
	\bibitem{F1} M. Frank, Self-duality and $\mathcal{C}^{\ast}$-reflexivity of Hilbert $\mathcal{C}^{\ast}$-modules, Zeitschr Anal. Anw. 9 (1990), 165-176.
	\bibitem{F2} M. Frank, Geometrical aspects of Hilbert $\mathcal{C}^{\ast}$-modules, Positivity 1998.
	\bibitem{F3} M. Frank, D. R. Larson, $\mathcal{A}$-module frame concept for Hilbert $\mathcal{C}^{\ast}$-modules, functinal and harmonic analysis of
	wavelets, Contempt. Math. 247 (2000), 207-233.
	\bibitem{F4} M. Frank, D. R. Larson, Frames in Hilbert $\mathcal{C}^{\ast}$-modules and $\mathcal{C}^{\ast}$-algebras, J. Oper. Theory 48 (2002), 273-314.
	\bibitem{Gab} D. Gabor, Theory of communications, J. Elec. Eng. 93 (1946), 429-457.
	\bibitem{Kap} I. Kaplansky, Modules over operator algebras, Amer. J. Math. 75 (1953), 839-858.
	\bibitem{BA} A. Khosravi,B. Khosravi, Frames and bases in tensor products of Hilbert spaces and Hilbert $\mathcal{C}^{\ast}$-modules, Proc.Indian Acad.Sci.,Math.Sci.117,no.1, 1-12,2007.
	\bibitem{AB} A. Khorsavi, B. Khorsavi, Fusion frames and g-frames in Hilbert $\mathcal{C}^{\ast}$-modules, Int. J.Wavelet, Multiresolution
	and Information Processing 6 (2008), 433-446.
	\bibitem{Mus} A. Khorsavi, K. Musazadeh, Fusion frames and g-frames, J. Math. Anal. Appl. 343 (2008), 1068-1083.
	\bibitem{Hos} S.Hosseini,A.Khosravi, g-Frames And Operator Valued-Frames In Hilbert $\mathcal{C}^{\ast}$-Modules, Methods of Functional Analysis and Topology, Vol.17,no.1,pp.10-19,2011.
	\bibitem{Lan} E. C. Lance, Hilbert $\mathcal{C}^{\ast}$-modules: A Toolkit for Operator Algebraists, London Math. Soc. Lecture Series
	1995.
	\bibitem{Tro} V.M.Manuilov,E.V.Troitsky, Hilbert $\mathcal{C}^{\ast}$-modules, AMS, V.226,2005.
	\bibitem{Li} S. Li, H. Ogawa, Pseudoframes for subspaces with applications, J. Fourier Anal. Appl. 10 (2004), 409-431.
	\bibitem{Sun} W. Sun, G-frame and G-Riesz Bases, J. Math. Appl. 32 (2006), 437-456.
	
\end{thebibliography}

\end{document}